\documentclass[11pt]{amsart}
\usepackage[english]{babel}
\usepackage{amssymb}
\usepackage{amsmath}
\usepackage{amsthm}
\usepackage{bbold}
\usepackage{bbm}
\usepackage{mathtools}
\usepackage{dsfont}
\usepackage{color}
\usepackage[scr=rsfso]{mathalfa}
\newtheorem{thm}{Theorem}

\newtheorem{exam}{Example}
\newtheorem{cor}{Corollary}

\newtheorem{defi}{Definition}
\newtheorem{ques}{Question}
\numberwithin{equation}{section}
\DeclareSymbolFont{bbold}{U}{bbold}{m}{n}

\DeclareMathOperator{\converg}{\xrightarrow[]{c}}
\DeclareMathOperator{\AS}{\xrightarrow[]{a.e.}}
\DeclareMathOperator{\convergX}{\xrightarrow[]{c_X}}
\DeclareMathOperator{\convergY}{\xrightarrow[]{c_Y}}
\DeclareMathOperator{\CX}{\xrightarrow[]{c_X}}
\DeclareMathOperator{\CE}{\xrightarrow[]{c_E}}

\DeclareMathOperator{\OF}{\xrightarrow[]{o_F}}
\DeclareMathOperator{\uoc}{\xrightarrow[]{uo}}
\DeclareMathOperator{\oc}{\xrightarrow[]{o}}

\DeclareMathOperator{\mc}{\xrightarrow[]{\mathcal{M}}}

\setlength{\parindent}{0in}
\renewcommand{\le}{\leqslant}
\renewcommand{\ge}{\geqslant}

\begin{document}

\title{AMS Journal Sample}
	\author{E. Y. Emelyanov$^{1,2}$}
	\address{$^1$ Middle East Technical University, 06800 Ankara, Turkey}
	\email{eduard@metu.edu.tr}
	\address{$2$ Sobolev Institute of Mathematics, 630090 Novosibirsk, Russia}
	\email{emelanov@math.nsc.ru}
	\author{M. Marabeh$^3$}
	\address{$^3$Department of Applied Mathematics, College of Sciences and Arts, Palestine Technical University-Kadoorie, Tulkarem, Palestine}
	\email{mohammad.marabeh@ptuk.edu.ps, m.maraabeh@gmail.com}
	
	\keywords{Brezis - Lieb's lemma, Banach lattice, $uo$-convergence, Brezis - Lieb space}
	\subjclass[2010]{Primary: 28A20, 46A19, 46B30, 46E30}
	\date{\today}
	
\title{Brezis - Lieb spaces and an operator version of Brezis - Lieb's lemma}

\begin{abstract}
The Brezis - Lieb spaces, in which Brezis - Lieb's lemma holds true for 
nets, are introduced and studied. An operator version of Brezis - Lieb's  
lemma is also investigated. 
\end{abstract}

\maketitle
\date{\today}

\section{Introduction}

Throughout the paper, $(\Omega,\Sigma,\mu)$ stands for a measure space in which every set $A\in\Sigma$ of nonzero measure possesses
a subset $A_0\subseteq A$, $A_0\in\Sigma$, such that $0<\mu(A_0)<\infty$. The famous Brezis - Lieb lemma \cite[Thm.2]{BL1} 
is known as Theorem \ref{Thm1} \cite[Thm.2]{BL1}, and as its corollary, Theorem \ref{Thm2} \cite[Thm.1]{BL1}, 
and also as Theorem \ref{Thm3} (cf. \cite[Cor.3]{N1}), which is a corollary of Theorem \ref{Thm2}. 

\begin{thm}[Brezis - Lieb's lemma]\label{Thm1}
Let $j:\mathbb{C}\to\mathbb{C}$ be a continuous function with $j(0)=0$ such that, for every $\varepsilon>0$, there exist 
two non-negative continuous functions $\phi_\varepsilon, \psi_\varepsilon: \mathbb{C}\to\mathbb{R}_+$ with
\begin{equation}\label{1.1}
  |j(x+y)-j(x)|\le\varepsilon\phi_{\varepsilon}(x)+\psi_{\varepsilon}(y) \ \ \ \ \ (\forall x,y\in \mathbb{C}).
\end{equation}
Let $g_n$ and $f$ be $($$\mathbb{C}-$valued$)$ functions in $\mathcal{L}^0(\mu)$ such that $g_n\AS 0$; $j(f)$, $\phi_{\varepsilon}(g_n)$, 
$\psi_{\varepsilon}(f)\in \mathcal{L}^1(\mu)$ for all $\varepsilon>0$, $n\in\mathbb{N}$; and let 
$$
  \sup\limits_{\varepsilon>0,n\in\mathbb{N}} \ \int\limits_\Omega\phi_{\varepsilon}(g_n(\omega))d\mu(\omega)\le C<\infty.
$$
Then
\begin{equation}\label{1.2}
  \lim\limits_{n\to\infty}\int\limits_\Omega|j(f+g_n)-(j(f)+j(g_n))|d\mu(\omega)=0.
\end{equation}
\end{thm}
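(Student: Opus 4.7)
The plan is to adapt the classical truncation trick of Brezis and Lieb, estimating the integrand in \eqref{1.2} by a controllable error term plus a piece dominated by an integrable function, and then letting $\varepsilon\to 0$. Concretely, for each $\varepsilon>0$ and $n\in\mathbb N$ I would introduce the nonnegative function
\begin{equation*}
  W_{n,\varepsilon}(\omega):=\bigl(|j(f+g_n)-j(f)-j(g_n)|-\varepsilon\phi_\varepsilon(g_n)\bigr)_+,
\end{equation*}
so that $|j(f+g_n)-j(f)-j(g_n)|\le W_{n,\varepsilon}+\varepsilon\phi_\varepsilon(g_n)$ pointwise. The role of $W_{n,\varepsilon}$ is to absorb the $\varepsilon\phi_\varepsilon(g_n)$-term of \eqref{1.1}, leaving a residue that is both a.e.\ convergent to $0$ and dominated by an $\mathcal L^1$-function independent of $n$.

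First I would prove pointwise convergence $W_{n,\varepsilon}(\omega)\to 0$ for a.e.\ $\omega$. Since $g_n\AS 0$ and $j$ is continuous with $j(0)=0$, we have $j(g_n)\AS 0$ and $j(f+g_n)\AS j(f)$, hence $|j(f+g_n)-j(f)-j(g_n)|\AS 0$; truncating by $\varepsilon\phi_\varepsilon(g_n)\ge 0$ can only make things smaller, so $W_{n,\varepsilon}\AS 0$.

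Next I would produce an $n$-independent $\mathcal L^1$-majorant. Apply \eqref{1.1} with $x=g_n$ and $y=f$ to obtain
\begin{equation*}
  |j(f+g_n)-j(g_n)|\le\varepsilon\phi_\varepsilon(g_n)+\psi_\varepsilon(f),
\end{equation*}
which by the triangle inequality yields
\begin{equation*}
  |j(f+g_n)-j(f)-j(g_n)|\le\varepsilon\phi_\varepsilon(g_n)+\psi_\varepsilon(f)+|j(f)|.
\end{equation*}
Therefore $W_{n,\varepsilon}\le\psi_\varepsilon(f)+|j(f)|\in\mathcal L^1(\mu)$ by hypothesis, and Lebesgue's dominated convergence theorem gives $\int_\Omega W_{n,\varepsilon}\,d\mu\to 0$ as $n\to\infty$, for each fixed $\varepsilon>0$.

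To conclude, integrate the basic estimate $|j(f+g_n)-j(f)-j(g_n)|\le W_{n,\varepsilon}+\varepsilon\phi_\varepsilon(g_n)$ and use the uniform bound $\int\phi_\varepsilon(g_n)d\mu\le C$, which gives
\begin{equation*}
  \limsup_{n\to\infty}\int_\Omega|j(f+g_n)-j(f)-j(g_n)|\,d\mu\le\varepsilon C.
\end{equation*}
As $\varepsilon>0$ is arbitrary, \eqref{1.2} follows. The main subtlety is the order of quantifiers: one must keep $\varepsilon$ fixed while letting $n\to\infty$ through the dominated convergence argument, and only afterwards let $\varepsilon\to 0$; the uniformity of the bound $C$ in both $\varepsilon$ and $n$ is what makes this legitimate. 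No further difficulty is expected beyond verifying that $\psi_\varepsilon(f)+|j(f)|$ is indeed integrable, which is part of the standing assumptions.
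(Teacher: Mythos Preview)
Your proof is correct and follows exactly the classical Brezis--Lieb truncation argument. The paper does not give a separate proof of Theorem~\ref{Thm1} (it refers to \cite{BL1}), but its proof of the operator version, Theorem~\ref{OVBL-NV}, is the direct abstraction of the very same steps you carry out: define $w_{\varepsilon,\alpha}=(|J(f+g_\alpha)-Jf-Jg_\alpha|-\varepsilon\Phi_\varepsilon g_\alpha)_+$, bound it by $\Psi_\varepsilon f+|Jf|$, pass to the limit in $\alpha$, and then let $\varepsilon\to 0$.
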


For a proof of Theorem \ref{Thm1}, see \cite{BL1}.

\begin{thm}[Brezis - Lieb's lemma for $\mathcal{L}^p$ $(0<p<\infty)$]\label{Thm2}
Suppose $f_n\AS f$ and $\int\limits_{\Omega}|f_n|^pd\mu\le C<\infty$ for all $n$ and some $p\in(0,\infty)$. Then 
\begin{equation}\label{1.3}
  \lim\limits_{n\to\infty}\bigg\{\int\limits_{\Omega}\bigg(|f_n|^p-|f_n-f|^p\bigg)d\mu\bigg\}=\int\limits_{\Omega}|f|^pd\mu. 
\end{equation}
\end{thm}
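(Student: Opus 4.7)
The plan is to deduce Theorem \ref{Thm2} as a corollary of Theorem \ref{Thm1} by choosing $j(z):=|z|^p$ on $\mathbb{C}$, setting $g_n:=f_n-f$, and producing admissible $\phi_\varepsilon,\psi_\varepsilon$. With these choices $g_n\AS 0$ by hypothesis, and the integrand in \eqref{1.2} becomes $\bigl||f_n|^p-|f_n-f|^p-|f|^p\bigr|$, so that \eqref{1.2} combined with the triangle inequality under the integral sign immediately yields \eqref{1.3}.

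The first task is the pointwise $\varepsilon$-inequality \eqref{1.1} for $j(z)=|z|^p$, namely
\[
  \bigl||x+y|^p-|x|^p\bigr|\le \varepsilon\,|x|^p+C(\varepsilon,p)\,|y|^p \qquad (x,y\in\mathbb{C}).
\]
For $0<p\le 1$ the subadditivity $|x+y|^p\le|x|^p+|y|^p$ (together with its mirror image $|x|^p\le|x+y|^p+|y|^p$) delivers this bound with $C(\varepsilon,p)=1$ and no $\varepsilon$-term needed at all. For $p>1$, I would start from the mean value estimate $\bigl||x+y|^p-|x|^p\bigr|\le p\,|y|\,(|x|+|y|)^{p-1}$ and then apply a Young inequality to split $(|x|+|y|)^{p-1}|y|$ into $\eta|x|^p+C_\eta|y|^p$, selecting $\eta$ small enough so that the prefactor of $|x|^p$ falls below $\varepsilon$. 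The functions $\phi_\varepsilon(x):=|x|^p$ and $\psi_\varepsilon(y):=C(\varepsilon,p)|y|^p$ are then continuous, nonnegative, and satisfy \eqref{1.1}.

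For the remaining hypotheses of Theorem \ref{Thm1}: Fatou's lemma applied to $f_n\AS f$ and $\int|f_n|^p\,d\mu\le C$ forces $\int|f|^p\,d\mu\le C$, hence $j(f)=|f|^p\in\mathcal{L}^1(\mu)$ and $\psi_\varepsilon(f)\in\mathcal{L}^1(\mu)$. The elementary bound $(a+b)^p\le K_p(a^p+b^p)$ (with $K_p=2^{p-1}$ when $p\ge 1$ and $K_p=1$ when $0<p\le 1$) yields
\[
  \int_\Omega\phi_\varepsilon(g_n)\,d\mu=\int_\Omega|f_n-f|^p\,d\mu\le 2K_p C
\]
uniformly in $n$ and $\varepsilon$, providing both the $\mathcal{L}^1$-membership of $\phi_\varepsilon(g_n)$ and the sup bound required by Theorem \ref{Thm1}. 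Applying Theorem \ref{Thm1} with these data then completes the argument.

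The main technical obstacle is the pointwise inequality in the range $p>1$: the $\varepsilon$-coefficient must multiply $|x|^p$ (morally $|f|^p$, an object controlled only in the mean), while $|y|^p$ has to absorb the entire $n$-dependent part; this is exactly the Young-type splitting above. Once this estimate is in place, the rest of the proof is a bookkeeping reduction to Theorem \ref{Thm1}.
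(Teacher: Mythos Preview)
Your proposal is correct and follows exactly the paper's approach: choose $j(z)=\phi_\varepsilon(z)=|z|^p$, $\psi_\varepsilon(z)=C_\varepsilon|z|^p$, set $g_n=f_n-f$, obtain $f\in\mathcal{L}^p(\mu)$ via Fatou, and invoke Theorem~\ref{Thm1}. One small slip in your closing commentary: in the application of~\eqref{1.1} it is $x$ that plays the role of $g_n$ (the $n$-dependent part with uniformly bounded $p$-th moment) and $y$ that plays the role of the fixed $f$, not the other way around --- your formal steps have this right, only the parenthetical ``morally $|f|^p$'' is reversed.
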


We reproduce here the arguments from \cite{BL1} since they are short and instructive. Take $j(z)=\phi_{\varepsilon}(z):=|z|^p$ 
and $\psi_{\varepsilon}(z)=C_{\varepsilon}|z|^p$ for a sufficiently large $C_{\varepsilon}$. Theorem \ref{Thm1} applied 
to $g_n=f_n-f$ ensures $f\in \mathcal{L}^p(\mu)$, which, in view of (\ref{1.2}), completes the proof of Theorem \ref{Thm2}.
Theorem \ref{Thm3} below is an immediate corollary of Theorem \ref{Thm2} (cf. also \cite[Cor.3]{N1}).

\begin{thm}[Brezis - Lieb's lemma for $L^p$ $(1\le p<\infty)$]\label{Thm3}
Let $\mathbf{f}_n\AS \mathbf{f}$ in $L^p(\mu)$ and $\|\mathbf{f}_n\|_p\to\|\mathbf{f}\|_p$, where 
$\|\mathbf{f}_n\|_p:=\bigg[{\int\limits_{\Omega}|f_n|^pd\mu}\bigg]^{1/p}$ with 
$f_n \in \mathcal{L}^p(\mu)$ and $ f_n\in\mathbf{f}_n$. Then $\|\mathbf{f}_n-\mathbf{f}\|_p\to 0$.
\end{thm}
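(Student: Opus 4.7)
The plan is to reduce Theorem \ref{Thm3} directly to Theorem \ref{Thm2}. Since the hypothesis $\|\mathbf{f}_n\|_p\to\|\mathbf{f}\|_p$ already provides a uniform bound on $\int_\Omega |f_n|^p d\mu$, the hypotheses of Theorem \ref{Thm2} are met once we fix representatives $f_n \in \mathbf{f}_n$ and $f \in \mathbf{f}$ with $f_n \to f$ pointwise almost everywhere.

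First I would choose $C>0$ so that $\int_\Omega |f_n|^p d\mu \le C$ for all $n$; such a $C$ exists because the convergent sequence $\|\mathbf{f}_n\|_p$ is bounded and $\mathbf{f}\in L^p(\mu)$. Theorem \ref{Thm2} then yields
\begin{equation*}
\lim_{n\to\infty}\int_\Omega\bigl(|f_n|^p-|f_n-f|^p\bigr)d\mu = \int_\Omega|f|^p d\mu.
\end{equation*}
On the other hand, the assumption $\|\mathbf{f}_n\|_p\to\|\mathbf{f}\|_p$ gives $\int_\Omega|f_n|^p d\mu\to\int_\Omega|f|^p d\mu$. Subtracting the two limits,
\begin{equation*}
\int_\Omega|f_n-f|^p d\mu = \int_\Omega|f_n|^p d\mu-\int_\Omega\bigl(|f_n|^p-|f_n-f|^p\bigr)d\mu \longrightarrow \int_\Omega|f|^p d\mu-\int_\Omega|f|^p d\mu = 0.
\end{equation*}
Since $p\ge 1$, this means precisely $\|\mathbf{f}_n-\mathbf{f}\|_p\to 0$.

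There is no real obstacle here; the argument is purely bookkeeping, and the only mild point to mention is that passing from the equivalence classes $\mathbf{f}_n,\mathbf{f}$ to the pointwise representatives $f_n,f$ is unambiguous because all quantities appearing are invariant under modification on null sets. The restriction $p\ge 1$ is not used in the computation itself but is needed for $\|\cdot\|_p$ to be a genuine norm on $L^p(\mu)$ so that the statement $\|\mathbf{f}_n-\mathbf{f}\|_p\to 0$ is meaningful in the stated form.
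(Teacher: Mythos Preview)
Your argument is correct and is exactly the route the paper indicates: it states that Theorem~\ref{Thm3} is an immediate corollary of Theorem~\ref{Thm2}, and you have simply written out that deduction in full.
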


Theorem \ref{Thm3} is a Banach lattice type result if $a.e.-$convergence is replaced 
by $uo-$convergence (cf. \cite[Prop.3.1]{GTX1}). It motivates us to investigate the 
general class of Banach lattices, in which the statement of Theorem \ref{Thm3} yields.
Even more important reason for such investigation relies on the fact that all the above 
versions of Brezis - Lieb's lemma in Theorems \ref{Thm1}, \ref{Thm2}, and \ref{Thm3}, 
are sequential due to the sequential nature of $a.e.-$convergence. 
It is worth to mention that Corollary \ref{BL for nets in L^p} may serves 
as an extension of the Brezis - Lieb lemma (in form of Theorem \ref{Thm3}) for nets.  
 
In Section 2, we introduce Brezis - Lieb's spaces and their sequential version. Then we
prove Theorem \ref{BLS} which gives an internal geometric characterization of Brezis - Lieb's 
spaces. We also discuss possible extensions of Theorem \ref{BLS} to locally solid Riesz spaces. 

In Section 3, we prove Theorem \ref{OVBL-NV} which can be seen as an operator version 
of Theorem \ref{Thm1} in convergence spaces.

For the theory of vector lattices we refer to \cite{AB1,AB2} and for unbounded convergences to \cite{DE1,DEM1,DEM2,GX1,GTX1,GLX1}.

\section{Brezis - Lieb spaces}

\begin{defi}
A normed lattice $(E,\|\cdot\|)$ is said to be a {\em Brezis - Lieb space $($shortly, a $BL-$space$)$} 
$($resp. {\em $\sigma$-Brezis - Lieb space $($$\sigma$-$BL-$space$)$}$)$ if, for any net $x_\alpha$ 
$($resp, for any sequence $x_n$$)$ in $X$ such that $\|x_\alpha\|\to\|x_0\|$ $($resp. $\|x_n\|\to\|x_0\|$$)$
and $x_\alpha\uoc x_0$ $($resp. $x_n\uoc x_0$$)$ we have that $\|x_\alpha-x_0\|\to 0$ $($resp. $\|x_n-x_0\|\to 0$$)$.
\end{defi}

Trivially, any normed Brezis - Lieb space is a $\sigma$-BL-space, and any finite-dimensional normed lattice is a $BL$-space.
Taking into account that $a.e.-$con\-ver\-gence for sequences in $L^p$ is the same as $uo-$conver\-gence \cite[Prop.3.1]{GTX1}, 
Theorem \ref{Thm3} says exactly that $L^p$ is a $\sigma$-BL-space for $1\le p<\infty$. 

\begin{exam}\label{c_0 is not sigma-BL-space}
The Banach lattice $c_0$ is not a $\sigma-$Brezis - Lieb space. To see this, take 
$x_n=e_{2n}+\sum\limits_{k=1}^{n}\frac{1}{k}e_k$ and $x=\sum\limits_{k=1}^{\infty}\frac{1}{k}e_k$ in $c_0$.
Clearly, $\|x\|=\|x_n\|=1$ for all $n$ and $x_n\uoc x$, however $1=\|x-x_n\|$ does not converge to $0$.   
\end{exam}

A slight change of an infinite-dimensional BL-space may turn it 
into a normed lattice which is even not a $\sigma-$BL-space. 

\begin{exam}\label{BL-spaces are not stable}
Let $E$ be a Brezis - Lieb space, $\dim(E)=\infty$. Let $E_1=\mathbb{R}\oplus_{\infty}E$.
Take any disjoint sequence $(y_n)_{n=1}^{\infty}$ in $E$ such that $\|y_n\|_E \equiv 1$. Then $y_n\uoc 0$ in $E$ \cite[Cor.3.6]{GTX1}.
Let $x_n=(1,y_n)\in E_1$. Then $\|x_n\|_{E_1}=\sup(1,\|y_n\|_E)=1$ and $x_n=(1,y_n)\uoc (1,0)=:x$ in $E_1$, however 
$\|x_n-x\|_{E_1}=\|(0,y_n)\|_{E_1}=\|y_n\|_E=1$ and so, $x_n$ does not converge to $x$ in $(E_1,\|\cdot\|_{E_1})$.
Therefore $E_1=\mathbb{R}\oplus_{\infty}E$ is not a $\sigma-$Brezis - Lieb space.
\end{exam}

In order to characterize BL-spaces, we introduce the following definition.

\begin{defi}\label{BL-property}
A normed lattice $(E,\|\cdot\|)$ is said to have the {\em Brezis - Lieb property $($shortly, $BL$-property$)$}, whenever
$\limsup\limits_{n\to\infty}\|u_0+u_n\|>\|u_0\|$ for any disjoint normalized 
sequence $(u_n)_{n=1}^{\infty}$ in $E_+$ and for any $u_0\in E$, $u_0>0$.
\end{defi}

Clearly, every finite dimensional normed lattice $E$ has the $BL-$property.
The Banach lattice $c_0$ obviously does not have the $BL-$property. The modification 
of the norm in an infinite-dimensional Banach lattice $E$ with the $BL-$property,
as in Example \ref{BL-spaces are not stable}, turns it into a Banach lattice $E_1=\mathbb{R}\oplus_{\infty}E$
without the $BL-$property. Indeed, take a disjoint normalized sequence $(y_n)_{n=1}^{\infty}$ in $E_+$.
Let $u_0=(1,0)$ and $u_n=(0,y_n)$ for $n\ge 1$. Then $(u_n)_{n=0}^{\infty}$ is a disjoint normalized sequence
in $(E_{1})_+$ with $\limsup\limits_{n\to\infty}\|u_0+u_n\|=1$.
Remarkably, it is not a coincidence.

\begin{thm}\label{BLS}
For a $\sigma-$Dedekind complete Banach lattice $E$, the following conditions are equivalent$:$\\
$(1)$ $E$ is a Brezis - Lieb space$;$\\
$(2)$ $E$ is a $\sigma-$Brezis - Lieb space$;$\\
$(3)$ $E$ has the $BL$-property and the norm in $E$ is order continuous.
\end{thm}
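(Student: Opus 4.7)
The plan is to prove the equivalence via the cycle $(1) \Rightarrow (2) \Rightarrow (3) \Rightarrow (1)$; the first implication is immediate since sequences are nets.

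For $(2) \Rightarrow (3)$, the BL-property is extracted directly from $\sigma$-BL: given a disjoint normalized $(u_n) \subseteq E_+$ and $u_0 > 0$ with $\limsup \|u_0 + u_n\| \le \|u_0\|$, the monotone bound $\|u_0 + u_n\| \ge \|u_0\|$ forces $\|u_0 + u_n\| \to \|u_0\|$; since $u_n \uoc 0$ by disjointness, the $\sigma$-BL hypothesis yields $\|u_n\| \to 0$, contradicting $\|u_n\|=1$. For order continuity, suppose it fails. Then $\sigma$-Dedekind completeness together with Meyer--Nieberg supplies a disjoint sequence $(d_n) \subseteq [0, u]$ with $\|d_n\| \ge \varepsilon$; set $s_n = d_1 + \cdots + d_n$ and $s = \bigvee_n d_n \in E$, so that $s_n \uparrow s$, $s_n \uoc s$, and $\|s - s_n\| \ge \|d_{n+1}\| \ge \varepsilon$. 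If $\|s_n\| \to \|s\|$, then $\sigma$-BL immediately gives $\|s_n - s\| \to 0$, a contradiction. Otherwise $\|s_n\| \to L < \|s\|$ along a subsequence; setting $e_n = d_{n+1}/\|d_{n+1}\|$, choose $\alpha_n \in (0, 2\|s\|]$ via the intermediate value theorem on the continuous non-decreasing map $\alpha \mapsto \|s_n + \alpha e_n\|$ so that $\|s_n + \alpha_n e_n\| = \|s\|$. The sequence $\tilde s_n := s_n + \alpha_n e_n$ satisfies $\tilde s_n \uoc s$ (since $(\alpha_n e_n)$ is bounded disjoint, hence $uo$-null) and $\|\tilde s_n\| = \|s\|$, while the disjoint decomposition of $\tilde s_n - s$ as a multiple of $d_{n+1}$ plus $\bigvee_{k > n+1} d_k$ produces $\|\tilde s_n - s\| \ge \|d_{n+2}\| \ge \varepsilon$, once again contradicting $\sigma$-BL.

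For $(3) \Rightarrow (1)$, consider a net $x_\alpha \uoc x_0$ with $\|x_\alpha\| \to \|x_0\|$; assume $x_0 \neq 0$ and, by contradiction, pass to a subnet with $\|y_\alpha\| := \|x_\alpha - x_0\| \ge \varepsilon$. Order continuity converts $y_\alpha \uoc 0$ into $\||y_\alpha| \wedge u\| \to 0$ for every $u \in E_+$, and the Riesz inequality $\||a + b| - |a| - |b|\| \le 2\||a| \wedge |b|\|$ applied with $a=x_0$, $b=y_\alpha$ yields $\||x_0| + |y_\alpha|\| \to \|x_0\|$. Next I perform a Kadec--Pe\l{}czy\'nski-type inductive extraction directly on the net: at step $n$, pick $\alpha_n$ in the directed set with $\|y_{\alpha_n}\| \ge \varepsilon$, $\||x_0| + |y_{\alpha_n}|\| < \|x_0\| + 1/n$, and $\||y_{\alpha_n}| \wedge (|x_0| + \sum_{k < n} d_k)\| < 1/n$, then let $d_n$ be the band projection of $|y_{\alpha_n}|$ onto the disjoint complement of $|x_0| + \sum_{k < n} d_k$. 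This produces a disjoint positive sequence $(d_n) \subseteq E_+$ with $d_n \perp |x_0|$, $\|d_n\| \to c \ge \varepsilon/2$, and $\||x_0| + d_n\| \to \|x_0\|$. Normalizing $u_n = d_n/\|d_n\|$ gives a disjoint normalized positive sequence with $\||x_0|/c + u_n\| \to \|x_0\|/c$, in direct contradiction with the BL-property applied to $u_0 = |x_0|/c > 0$.

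The main obstacle is the OC half of $(2) \Rightarrow (3)$: one must fabricate a sequence with three tightly linked properties ($uo$-convergence to $s$, norm exactly $\|s\|$, and norm distance to $s$ bounded below), and the scalar adjustment $\tilde s_n = s_n + \alpha_n e_n$ with $\alpha_n$ chosen by IVT is the key device. A secondary delicate point is executing the Kadec--Pe\l{}czy\'nski extraction in $(3) \Rightarrow (1)$ at the net level; this is feasible precisely because order continuity re-expresses $uo$-convergence as the norm condition $\||y_\alpha| \wedge u\| \to 0$, allowing one to inductively schedule the indices $\alpha_n$ past the thresholds required at each step.
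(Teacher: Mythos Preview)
Your implications $(1)\Rightarrow(2)$ and the BL-property half of $(2)\Rightarrow(3)$ coincide with the paper's argument. Your order-continuity half of $(2)\Rightarrow(3)$ is a variant of the paper's construction (both pick a disjoint order-bounded sequence via Fremlin--Meyer--Nieberg and then adjust a scalar coefficient to equalize norms); your version with the partial suprema $s_n\uparrow s$ and the IVT choice of $\alpha_n$ is correct as written.

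There is, however, a genuine gap in your $(3)\Rightarrow(1)$. The extraction step defines $d_n$ as the band projection of $|y_{\alpha_n}|$ onto $\{|x_0|+\sum_{k<n}d_k\}^{\perp}$, and you claim $\|d_n\|\to c\ge\varepsilon/2$. But the only smallness you have arranged is $\big\||y_{\alpha_n}|\wedge\big(|x_0|+\sum_{k<n}d_k\big)\big\|<1/n$, whereas the complementary band component $|y_{\alpha_n}|-d_n$ equals $\sup_m\big(|y_{\alpha_n}|\wedge m(|x_0|+\sum_{k<n}d_k)\big)$, which is in general much larger. Indeed, if $|x_0|$ happens to be a weak unit (its band is all of $E$), then $d_n=0$ for every $n$ regardless of how you choose $\alpha_n$, and the lower bound on $\|d_n\|$ fails outright. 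So band projections are the wrong disjointification device here.

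The paper avoids this by working instead with
\[
\omega_{\alpha_n}=\Big(w_{\alpha_n}-\sum_{k\ne n}w_{\alpha_n}\wedge w_{\alpha_k}\Big)^+,
\]
where $w_\alpha=(x_\alpha-x)^+$ after a preliminary reduction to $x_\alpha\in E_+$. Because one subtracts only the pairwise infima (not the full band component), one gets the explicit estimate $\|w_{\alpha_n}-\omega_{\alpha_n}\|\le\sum_{k\ne n}\|w_{\alpha_n}\wedge w_{\alpha_k}\|\le n2^{-n+1}$, which is exactly what secures the lower bound on $\|\omega_{\alpha_n}\|$. Your Kadec--Pe\l czy\'nski framework and the Riesz inequality $0\le|a|+|b|-|a+b|\le 2(|a|\wedge|b|)$ are fine; the repair is to replace the band projection by this subtraction-of-infima construction, scheduling the indices so that $\||y_\alpha|\wedge\sum_{k<n}|y_{\alpha_k}|\|<2^{-n}$ for $\alpha\ge\alpha_n$.
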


\begin{proof}
$(1)\Rightarrow (2)$ It is trivial.

$(2)\Rightarrow (3)$ We show first that $E$ has $BL$-property. Notice that in this part of the proof  
the $\sigma-$Dedekind completeness of $E$ will not be used. Suppose that there exists a disjoint normalized 
sequence $(u_n)_{n=1}^{\infty}$ in $E_+$ and a $u_0>0$ in $E$ with $\limsup\limits_{n\to\infty}\|u_0+u_n\|=\|u_0\|$.
Since $\|u_0\|\le\|u_0+u_n\|$, then $\lim\limits_{n\to\infty}\|u_0+u_n\|=\|u_0\|$.
Denote $v_n:=u_0+u_n$. By \cite[Cor.3.6]{GTX1}, $u_n\uoc 0$ and hence $v_n\uoc u_0$. 
Since $E$ is a $\sigma$-BL-space and $\lim\limits_{n\to\infty}\|v_n\|=\|u_0\|$, 
then $\|v_n-u_0\|\to 0$, which is impossible in view of $\|v_n-u_0\|=\|u_0+u_n-u_0\|=\|u_n\|=1$.

Assume that the norm in $E$ is not order continuous. Then, by the Fremlin--Meyer-Nieberg theorem 
(see for example \cite[Thm.4.14]{AB2}) there exist $y\in E_+$ and a disjoint sequence $e_k\in[0,y]$ 
such that $\|e_k\|\not\to 0$. Without lost of generality, we may assume $\|e_k\|=1$ for all $k\in\mathbb{N}$. 
By the $\sigma-$Dedekind completeness of $E$, for any sequence $\alpha_n\in\mathbb{R}_+$ there exist the following
vectors 
\begin{equation}\label{2.1}
  x_0=\bigvee\limits_{k=1}^{\infty}e_k, \ \ \  x_n=\alpha_{2n}e_{2n}+\bigvee\limits_{k=1,k\ne n,k\ne 2n}^{\infty}e_k 
	\ \ \ \ \ \ (\forall n\in \mathbb{N}).
\end{equation}
Now, we choose $\alpha_{2n}\ge 1$ in (\ref{2.1}) such that $\|x_n\|=\|x_0\|$ for all $n\in \mathbb{N}$.
Clearly, $x_n\uoc x_0$. Since $E$ is a $\sigma$-BL-space then $\|x_n-x_0\|\to 0$, violating 
$$
  \|x_n-x_0\|=\|(\alpha_{2n}-1)e_{2n}-e_n\|=\|(\alpha_{2n}-1)e_{2n}+e_n\|\ge\|e_n\|=1. 
$$
%\textcolor{red}{\em The above argument is not clear for me so I suggest the following to show that $\|x_n-x_0\|\ge\|e_n\|=1$:
%	$$ 
%	\|x_n-x_0\|=\|x_0-x_n\|=\|e_n-(\alpha_{2n}-1)e_{2n}\|=\| \big|e_n-(\alpha_{2n}-1)e_{2n}\big|  \| \ge \|\big(e_n-(\alpha_{2n}-1)e_{2n}\big)^+\|=\|e_n\|=1
%	$$
%	 }
%$$ 
%	\|x_0-x_n\|=\|\bigg|e_n-(\alpha_{2n}-1)e_{2n}\bigg|\| \ge \|(e_n-(\alpha_{2n}-1)e_{2n})^+\|=\|e_n\|=1.
%$$
Obtained contradiction shows that the norm in $E$ is order continuous.

$(3)\Rightarrow (1)$  
If $E$ is not a Brezis - Lieb space, then there exists a net $(x_\alpha)_{\alpha\in A}$ in $E$ 
such that $x_\alpha\uoc x$ and $\|x_\alpha\|\to\|x\|$ but 
$\|x_\alpha-x\|\not\to 0$. Then $|x_\alpha|\uoc |x|$ and $\||x_\alpha|\|\to\||x|\|$. 

Notice that $\||x_\alpha|-|x|\|\not\to 0$. Indeed, if $\||x_\alpha|-|x|\|\to 0$ 
then $(x_\alpha)_{\alpha\in A}$ is eventually in $[-|x|,|x|]$ and then $(x_\alpha)_{\alpha\in A}$
is almost order bounded. Since $E$ is order continuous and $x_\alpha\uoc x$, 
then by \cite[Pop.3.7.]{GX1} $\|x_\alpha-x\|\to 0$, which is impossible.
Therefore, without lost of generality, we may assume that $x_\alpha\in E_+$ and, 
by normalizing, also $\|x_\alpha\|=\|x\|=1$ for all $\alpha$.

Passing to a subnet, denoted again by $x_\alpha$, we may assume
\begin{equation}\label{2.2}
  \|x_\alpha-x\|>C>0 \ \ \ (\forall \alpha\in A).
\end{equation}
Notice that $x\ge(x-x_\alpha)^+=(x_\alpha-x)^-\uoc 0$, and hence $(x_\alpha-x)^-\oc 0$. 
The order continuity of the norm ensures 
\begin{equation}\label{2.3}
  \|(x_\alpha-x)^-\|\to 0.
\end{equation}
Denoting $w_\alpha=(x_\alpha-x)^+$ and using (\ref{2.2}) and (\ref{2.3}), we may also assume 
\begin{equation}\label{2.4}
  \|w_\alpha\|=\|(x_\alpha-x)^+\|>C \ \ \ (\forall \alpha\in A).
\end{equation}
In view of (\ref{2.4}), we obtain
\begin{equation}\label{2.5}
  2=\|x_\alpha\|+\|x\|\ge\|(x_\alpha-x)^+\|=\|w_\alpha\|>C \ \ \ (\forall \alpha\in A).
\end{equation}
Since $w_\alpha\uoc(x-x)^+=0$ then, for any fixed $\beta_1,\beta_2,...,\beta_n$,
\begin{equation}\label{2.6}
  0\le w_{\alpha}\wedge(w_{\beta_1}+w_{\beta_2}+...+w_{\beta_n})\oc 0 \ \ \ \ (\alpha\to\infty).
\end{equation}
Since $x_\alpha\uoc x$, then $x_\alpha\wedge x\uoc x\wedge x=x$ and so $x_\alpha\wedge x\oc x$. By the order continuity of the norm,
there is an increasing sequence of indices $\alpha_n$ in $A$ with
\begin{equation}\label{2.7}
  \|x-x_\alpha\wedge x\|\le 2^{-n} \ \ \ \ (\forall \alpha\ge\alpha_n).
\end{equation}
Furthermore, by (\ref{2.6}), we may also suppose that
\begin{equation}\label{2.8}
  \|w_{\alpha}\wedge(w_{\alpha_1}+w_{\alpha_2}+...+w_{\alpha_n})\|\le 2^{-n}\ \ \ \ (\forall\alpha\ge\alpha_{n+1}). 
\end{equation}
Since 
$$
  \sum\limits_{k=1,k\ne n}^{\infty}\|w_{\alpha_n}\wedge w_{\alpha_k}\|\le
	\sum\limits_{k=1}^{n-1}\|w_{\alpha_n}\wedge (w_{\alpha_1}+...+w_{\alpha_{n-1}})\|+
$$
$$	
	\sum\limits_{k=n+1}^{\infty}\|w_{\alpha_k}\wedge (w_{\alpha_1}+...+w_{\alpha_{k-1}})\|\le
	(n-1)\cdot 2^{-n+1}+\sum\limits_{k=n+1}^{\infty}2^{-k+1}=n2^{-n+1},
	\eqno(2.9)
$$
the series $\sum\limits_{k=1,k\ne n}^{\infty}w_{\alpha_n}\wedge w_{\alpha_k}$ converges absolutely and hence in norm for any $n\in\mathbb{N}$. Take
$$
  \omega_{\alpha_n}:=\bigg(w_{\alpha_n}-\sum\limits_{k=1,k\ne n}^{\infty}w_{\alpha_n}\wedge w_{\alpha_k}\bigg)^+ 
	\ \ \ \ (\forall n\in\mathbb{N}).
	\eqno(2.10)
$$
First, we show that the sequence $(\omega_{\alpha_n})_{n=1}^{\infty}$ is disjoint. Let $m\ne p$, then
$$
  \omega_{\alpha_m}\wedge\omega_{\alpha_p}=
	\bigg(w_{\alpha_m}-\sum\limits_{k=1,k\ne m}^{\infty}w_{\alpha_m}\wedge w_{\alpha_k}\bigg)^+\wedge
	\bigg(w_{\alpha_p}-\sum\limits_{k=1,k\ne p}^{\infty}w_{\alpha_p}\wedge w_{\alpha_k}\bigg)^+\le
$$
$$
	(w_{\alpha_m}-w_{\alpha_m}\wedge w_{\alpha_p})^+\wedge(w_{\alpha_p}-w_{\alpha_p}\wedge w_{\alpha_m})^+=
$$
$$	
	(w_{\alpha_m}-w_{\alpha_m}\wedge w_{\alpha_p})\wedge(w_{\alpha_p}-w_{\alpha_m}\wedge w_{\alpha_p})=0.
$$
By (2.9),
$$
  \|w_{\alpha_n}-\omega_{\alpha_n}\|=\bigg\|w_{\alpha_n}-\bigg(w_{\alpha_n}-\sum\limits_{k=1,k\ne n}^{\infty}w_{\alpha_n}\wedge w_{\alpha_k}\bigg)^+\bigg\|=
$$
$$
  \bigg\|w_{\alpha_n}-\bigg(w_{\alpha_n}-w_{\alpha_n}\wedge\sum\limits_{k=1,k\ne n}^{\infty}w_{\alpha_n}\wedge w_{\alpha_k}\bigg)\bigg\|=
  \bigg\|w_{\alpha_n}\wedge\sum\limits_{k=1,k\ne n}^{\infty}w_{\alpha_n}\wedge w_{\alpha_k}\bigg\|\le
$$
$$
  \|\sum\limits_{k=1,k\ne n}^{\infty}w_{\alpha_n}\wedge w_{\alpha_k}\|\le n2^{-n+1}.\ \ \ \ (\forall n\in\mathbb{N}). 
	\eqno(2.11)
$$
Combining (2.11) with (\ref{2.5}) gives 
$$
  2\ge\|w_{\alpha_n}\|\ge\|\omega_{\alpha_n}\|\ge C-n2^{-n+1}\ \ \ \ (\forall n\in\mathbb{N}). 
  \eqno(2.12)
$$
Passing to further increasing sequence of indices, we may assume that
$$
  \|w_{\alpha_n}\|\to M\in[C,2] \ \ \ \ (n\to\infty).
$$
Now
$$
  \lim\limits_{n\to\infty}\bigg\|M^{-1}x+\|\omega_{\alpha_n}\|^{-1}\omega_{\alpha_n}\bigg\|=
	M^{-1}\lim\limits_{n\to\infty}\|x+\omega_{\alpha_n}\|=[\text{by} \ (2.11)]=
$$
$$	
	M^{-1}\lim\limits_{n\to\infty}\|x+w_{\alpha_n}\|=[\text{by} \ (2.3)]=
	M^{-1}\lim\limits_{n\to\infty}\|x+(x_{\alpha_n}-x)\|=
$$
$$
	M^{-1}\lim\limits_{n\to\infty}\|x_{\alpha_n}\|=M^{-1}=\|M^{-1}x\|,
$$
violating the the Brezis - Lieb property for $u_0=M^{-1}x$ and $u_n=\|\omega_{\alpha_n}\|^{-1}\omega_{\alpha_n}$,
$n\ge 1$. The obtained contradiction completes the proof.
\end{proof}

The next fact is a corollary of Theorem \ref{BLS} which states a {\em Brezis - Lieb's type lemma for nets in $L^p$}. 

\begin{cor}\label{BL for nets in L^p}
Let $\mathbf{f}_\alpha\uoc\mathbf{f}$ in $L^p(\mu)$, $(1\le p<\infty)$, and $\|\mathbf{f}_\alpha\|_p\to\|\mathbf{f}\|_p$.
Then $\|\mathbf{f}_\alpha-\mathbf{f}\|_p\to 0$.
\end{cor}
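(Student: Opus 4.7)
The plan is to derive Corollary \ref{BL for nets in L^p} directly from Theorem \ref{BLS} by verifying that $L^p(\mu)$, for $1\le p<\infty$, satisfies condition $(3)$ of that theorem. Once I know that $L^p(\mu)$ is a Brezis--Lieb space, the statement of the corollary is literally the definition of a $BL$-space applied to the net $\mathbf{f}_\alpha$.

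I would first dispatch the easy ingredients of $(3)$. The space $L^p(\mu)$ is Dedekind complete, hence $\sigma$-Dedekind complete, and for $1\le p<\infty$ its norm is order continuous; both facts are classical and I would simply cite them (e.g.\ from \cite{AB1,AB2}).

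The substantive step is to check the $BL$-property. Fix a disjoint normalized sequence $(u_n)_{n=1}^\infty$ in $L^p(\mu)_+$ and any $u_0>0$ in $L^p(\mu)$. I claim that $\|u_0+u_n\|_p^p\ge\|u_0\|_p^p+1$ for every $n$, which forces $\limsup_n\|u_0+u_n\|_p>\|u_0\|_p$, as required. The claim reduces to the elementary pointwise inequality $(a+b)^p\ge a^p+b^p$ valid for $a,b\ge 0$ and $p\ge 1$ (which follows, e.g., from the convexity of $t\mapsto t^p$ or directly from $(1+t)^p\ge 1+t^p$ for $t\ge 0$); applying it with $a=u_0(\omega)$, $b=u_n(\omega)$ and integrating gives $\|u_0+u_n\|_p^p\ge \|u_0\|_p^p+\|u_n\|_p^p=\|u_0\|_p^p+1$, since $\|u_n\|_p=1$.

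There is no real obstacle here: once the elementary pointwise inequality is in hand, the $BL$-property is automatic and Theorem \ref{BLS} does all the work. It is worth noting that the disjointness of $(u_n)_{n\ge 1}$ is not even used in this verification — only the normalization $\|u_n\|_p=1$ is needed. With $(3)$ established, the implication $(3)\Rightarrow(1)$ of Theorem \ref{BLS} says exactly that $L^p(\mu)$ is a Brezis--Lieb space, which is precisely the assertion of Corollary \ref{BL for nets in L^p}.
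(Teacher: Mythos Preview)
Your proposal is correct and follows the paper's intended route: the corollary is stated as an immediate consequence of Theorem~\ref{BLS}, and you supply exactly the verification that $L^p(\mu)$ meets the hypotheses of that theorem.

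There is a minor difference in emphasis worth noting. The paper has already observed, just after the definition of $BL$-spaces, that Theorem~\ref{Thm3} (the classical sequential Brezis--Lieb lemma) says $L^p$ is a $\sigma$-$BL$-space; since $L^p$ is Dedekind complete, the implication $(2)\Rightarrow(1)$ of Theorem~\ref{BLS} then finishes the job with no further computation. You instead verify condition~$(3)$ directly via the elementary pointwise inequality $(a+b)^p\ge a^p+b^p$. Your route is slightly longer but has the merit of being self-contained: it does not invoke the classical lemma at all, so Corollary~\ref{BL for nets in L^p} becomes a genuine generalization of Theorem~\ref{Thm3} rather than a consequence of it. Your side remark that disjointness of the $u_n$ is not needed for the $BL$-property in $L^p$ is also correct and pleasant.
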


We do not know where or not implication $(2)\Rightarrow (3)$ of Theorem \ref{BLS} holds true without the assumption
that the Banach lattice $E$ is $\sigma-$Dedekind complete. 

\begin{ques}
Does every $\sigma-$Brezis - Lieb Banach lattice have order continuous norm?
\end{ques}

In the proof of $(2)\Rightarrow (3)$ $\sigma-$Dedekind completeness of $E$ has been used only 
for showing that $E$ has order continuous norm. So, any $\sigma-$Brezis - Lieb Banach lattice has
the Brezis - Lieb property. Therefore, for answering in positive the question of possibility 
to drop $\sigma-$Dedekind completeness assumption in Theorem \ref{BLS}, it is sufficient 
to have the positive answer to the following question. 

\begin{ques}
Does the Brezis - Lieb property imply order continuity of the norm?
\end{ques}

In the end of the section we discuss possible generalizations of Brezis - Lieb spaces and Brezis - Lieb property. 
To avoid overloading the text, we restrict ourselves with the case of multi-normed Brezis - Lieb lattices, 
postponing the discussion of locally solid Brezis - Lieb lattices to further papers.

A multi-normed vector lattice (shortly, MNVL) $E=(E,\mathcal{M})$ (see \cite{DEM1}):\\
$(a)$ is said to be a {\em Brezis - Lieb space} if
$$
  [x_\alpha\uoc x_0 \ \ \& \ \ m(x_\alpha)\to m(x_0) \ \ (\forall m\in\mathcal{M})]\Rightarrow[x_\alpha\mc x_0].  
$$
$(b)$ has the {\em Brezis - Lieb property}, if for any disjoint sequence 
$(u_n)_{n=1}^{\infty}$ in $E_+$ such $u_n$ does not converge in $\mathcal{M}$ to 0 and for any $u_0>0$,
there exists $m\in\mathcal{M}$ such that $\limsup\limits_{n\to\infty}m(u_0+u_n)>m(u_0).$

A $\sigma$-Brezis - Lieb MNVL is defined by replacing of nets with sequences.

By using the above definitions one can derive from Theorem \ref{BLS} the following result, whose details are left to the reader. 

\begin{cor}\label{BLMNL}
For an MNVL $E$ with a separating order continuous multinorm $\mathcal{M}$, the following conditions are equivalent$:$\\
$(1)$ $E$ is a Brezis - Lieb space$;$\\
$(2)$ $E$ is a $\sigma-$Brezis - Lieb space$;$\\
$(3)$ $E$ has the Brezis - Lieb property.
\end{cor}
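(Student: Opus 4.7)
The plan is to mirror the proof of Theorem \ref{BLS} step by step, replacing the single norm $\|\cdot\|$ by an individual seminorm $m\in\mathcal{M}$ wherever a quantitative estimate is needed. The implication $(1)\Rightarrow(2)$ is immediate. For $(2)\Rightarrow(3)$, I would copy the first paragraph of the $(2)\Rightarrow(3)$ argument in Theorem \ref{BLS}: given a disjoint $(u_n)\subseteq E_+$ which does not $\mathcal{M}$-converge to $0$ and $u_0>0$ with $\limsup_n m(u_0+u_n)\le m(u_0)$ for every $m\in\mathcal{M}$, monotonicity of lattice seminorms on $E_+$ forces $m(u_0+u_n)\to m(u_0)$; disjointness gives $u_n\uoc 0$, so $v_n:=u_0+u_n\uoc u_0$; the $\sigma$-BL hypothesis then yields $v_n\mc u_0$, i.e.\ $u_n\mc 0$, contradicting the choice of $(u_n)$. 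Unlike in Theorem \ref{BLS}, no $\sigma$-Dedekind completeness step is required here because order continuity of $\mathcal{M}$ is part of the hypothesis.

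For $(3)\Rightarrow(1)$, suppose there exist a net $(x_\alpha)$ and $x$ with $x_\alpha\uoc x$ and $m(x_\alpha)\to m(x)$ for every $m\in\mathcal{M}$, but $m_0(x_\alpha-x)\not\to 0$ for some $m_0\in\mathcal{M}$. Passing to $|x_\alpha|$ and $|x|$ and invoking the multi-normed analogue of \cite[Prop.3.7]{GX1} (order-continuous $\mathcal{M}$ applied to an almost order bounded $uo$-convergent net yields $\mathcal{M}$-convergence), one reduces to $x_\alpha\in E_+$, $x>0$, and $m_0(x_\alpha-x)>C>0$ along a subnet. Then I would run the construction of Theorem \ref{BLS} verbatim with $m_0$ as the control seminorm: $(x_\alpha-x)^-\oc 0$ together with order continuity of $m_0$ gives $m_0((x_\alpha-x)^-)\to 0$, so $w_\alpha:=(x_\alpha-x)^+$ satisfies $m_0(w_\alpha)>C$; the summability estimate (2.9) and the disjointification (2.10) are carried out in $m_0$, producing a disjoint sequence $(\omega_{\alpha_n})\subseteq E_+$ with $m_0(\omega_{\alpha_n})\ge C/2$ eventually, which in particular does not $\mathcal{M}$-converge to $0$.

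The final step is to show that $u_0:=x$ and $u_n:=\omega_{\alpha_n}$ violate the BL-property. For \emph{each} $m\in\mathcal{M}$, estimate (2.11) applied to $m$ together with $m((x_{\alpha_n}-x)^-)\to 0$ yield
$$
  m(x+\omega_{\alpha_n})=m(x+w_{\alpha_n})+o(1)=m(x+(x_{\alpha_n}-x))+o(1)=m(x_{\alpha_n})+o(1)\to m(x),
$$
so $\limsup_n m(u_0+u_n)\le m(u_0)$ for every $m\in\mathcal{M}$, contradicting the BL-property.

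The main obstacle will be the reduction to $x_\alpha\in E_+$: unlike in the single-norm setting one cannot rescale globally, so the argument that $m_0(|x_\alpha|-|x|)\not\to 0$ must run the almost-order-boundedness and order-continuity machinery one seminorm at a time. Pleasantly, the remainder of the multi-normed proof is in fact \emph{cleaner} than the normed one: the MNVL version of the BL-property does not require the disjoint sequence to be normalized, which eliminates the ad hoc rescaling by the constant $M\in[C,2]$ appearing in the proof of Theorem \ref{BLS}.
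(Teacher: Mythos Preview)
The paper gives no proof of this corollary; it only says the result ``can be derived from Theorem \ref{BLS}'' and leaves the details to the reader.  Your plan of rerunning the proof of Theorem \ref{BLS} seminorm by seminorm is exactly the intended route, and your handling of $(1)\Rightarrow(2)$ and $(2)\Rightarrow(3)$ is correct.  You are also right that the $\sigma$-Dedekind completeness step disappears because order continuity of $\mathcal{M}$ is assumed, and that the MNVL form of the BL-property, requiring no normalization of $(u_n)$, lets you drop the rescaling by the constant $M\in[C,2]$.

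There is, however, a genuine gap in your $(3)\Rightarrow(1)$.  You build the sequence $(\alpha_n)$ and the disjointification $(\omega_{\alpha_n})$ using a \emph{single} control seminorm $m_0$ (so that the analogues of (2.8), (2.9), (2.11) hold for $m_0$), and then assert the final estimate $m(x+\omega_{\alpha_n})\to m(x)$ for \emph{every} $m\in\mathcal{M}$.  That last step needs, for each $m$, the convergences $m(w_{\alpha_n}-\omega_{\alpha_n})\to 0$, $m((x_{\alpha_n}-x)^-)\to 0$ and $m(x_{\alpha_n})\to m(x)$ along the \emph{extracted sequence}, not merely along the original net; since $(\alpha_n)$ was selected using $m_0$ alone and need not be cofinal in $A$, none of these is automatic for an arbitrary $m$.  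More basically, the very definition of $\omega_{\alpha_n}$ via (2.10) requires the series $\sum_{k\ne n}w_{\alpha_n}\wedge w_{\alpha_k}$ to converge \emph{in $E$}, and absolute convergence in the single seminorm $m_0$ does not give this in a multi-normed space.  If $\mathcal{M}$ admits a countable cofinal subfamily one can run a diagonal argument to arrange (2.7)--(2.8) simultaneously for all $m$; in the general case you need an additional device (or hypothesis) to make the net-to-sequence passage work uniformly in $m$.
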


\section{Operator version of Brezis - Lieb's lemma in convergent vector spaces}

In this section, we consider both complex and real vector spaces and vector lattices.   
A {\em convergence} ``$\converg$" for nets in a set $X$ is defined by the following conditions:\\
$(a)$ \ $x_\alpha\equiv x\Rightarrow x_\alpha\converg x$, and\\ 
$(b)$ \ $x_\alpha\converg x\Rightarrow x_\beta\converg x$ for every subnet $x_\beta$ of $x_\alpha$.\\
A mapping $f$ from a {\em convergence set} $(X,c_X)$ into a convergence set $(Y,c_Y)$ is said to be $c_Xc_Y-${\em continuous} 
(or just continuous), if $x_\alpha\convergX x$ implies $f(x_\alpha)\convergY f(x)$ for every net $x_\alpha$ in $X$. 

A subset $A$ of $(X,c_X)$ is called {\em $c_X-$closed} if $A\ni x_\alpha\convergX x\Rightarrow x\in A$.
If the set $\{x\}$ is $c_X-$closed for every $x\in X$ then $c_X$ is called {\em $T_1$-convergence}.
It is immediate to see that $c_X\in T_1$ iff every constant net $x_\alpha\equiv x$ does not $c_X-$converge to any $y\ne x$.

Under {\em convergence vector space} $(X,c_X)$ we understand a vector space $X$ with a convergence $c_X$ such that
the linear operations in $X$ are $c_X-$continuous. $(E,c_E)$ is a {\em convergence vector lattice} if $(E,c_E)$ is a convergence 
vector space which is a vector lattice where the lattice operations are also $c_E-$continuous. For further references see \cite{AB1,AB2,DE1}.

Motivated by the proof of the famous Brezis - Lieb's lemma \cite[Thm.2]{BL1}, we present its operator version in convergent spaces.

Given a convergence complex vector space $(X,c_X)$; two convergence complex vector lattices $(E,c_E)$ and $(F,c_F)$, where $F$ is Dedekind complete;
an order ideal $E_0$ in $E_+-E_+$; and a $c_{E_0}o_F-$continuous positive linear operator $T:E_0\to F$, where $o_F$ stands for the order 
convergence in $F$. Furthermore, let $J:X\to E$ be $c_Xc_E-$continuous, $J(0)=0$, and, for every $\varepsilon>0$, let there exist 
two $c_Xc_E-$continuous mappings $\Phi_\varepsilon,\Psi_\varepsilon:X\to E_+$ with
\begin{equation}\label{3.1}
  |J(x+y)-Jx|\le\varepsilon\Phi_{\varepsilon}x+\Psi_{\varepsilon}y \ \ \ \ \ (\forall x,y\in X).
\end{equation}

\begin{thm}[An operator version of Brezis - Lieb's lemma for nets]\label{OVBL-NV}
Let $X$, $E$, $E_0$, $F$, $T:E_0\to F$, and $J:X\to E$ satisfy the above hypothesis. 
Let $(g_\alpha)_{\alpha\in A}$ be a net in $X$ satisfying $g_\alpha\CX 0$, let $f\in X$ be such that 
$|Jf|,\Phi_{\varepsilon}g_\alpha,\Psi_{\varepsilon}f\in E_0$ for all 
$\varepsilon>0,\alpha\in A$, and let some $u\in F_+$ exist with $T\Phi_{\varepsilon}g_\alpha\le u$
for all $\varepsilon>0$, $\alpha\in A$. Then
$$
  T\bigg(|J(f+g_\alpha)-(Jf+Jg_\alpha)|\bigg)\OF 0 \ \ \ \ \ (\alpha\to\infty).
$$
\end{thm}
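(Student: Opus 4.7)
The plan is to imitate the classical Brezis–Lieb argument from \cite{BL1} inside the abstract convergence/operator framework, by introducing, for each $\varepsilon>0$ and each $\alpha\in A$, the excess vector
$$
  W_\varepsilon^\alpha := \bigl(|J(f+g_\alpha)-Jf-Jg_\alpha|-\varepsilon\Phi_\varepsilon g_\alpha\bigr)^+,
$$
so that automatically
$$
  |J(f+g_\alpha)-Jf-Jg_\alpha|\le W_\varepsilon^\alpha+\varepsilon\Phi_\varepsilon g_\alpha.
$$
The first thing I would check is that $W_\varepsilon^\alpha$ lies in $E_0$ with a uniform bound. Applying (\ref{3.1}) with $x=g_\alpha$, $y=f$ and adding $|Jf|$ gives $|J(f+g_\alpha)-Jf-Jg_\alpha|-\varepsilon\Phi_\varepsilon g_\alpha\le\Psi_\varepsilon f+|Jf|$, so $W_\varepsilon^\alpha\le\Psi_\varepsilon f+|Jf|\in E_0$, and since $E_0$ is an order ideal, $W_\varepsilon^\alpha\in E_0$.

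Next I would show that $W_\varepsilon^\alpha\CE 0$ for every fixed $\varepsilon>0$. From $g_\alpha\CX 0$ and $c_Xc_E$-continuity of $J$, $\Phi_\varepsilon$, together with continuity of the vector and lattice operations in $X$ and $E$, one gets $J(f+g_\alpha)\CE Jf$, $Jg_\alpha\CE J(0)=0$ and $\Phi_\varepsilon g_\alpha\CE\Phi_\varepsilon 0$; hence $|J(f+g_\alpha)-Jf-Jg_\alpha|-\varepsilon\Phi_\varepsilon g_\alpha\CE -\varepsilon\Phi_\varepsilon 0$ and therefore $W_\varepsilon^\alpha\CE(-\varepsilon\Phi_\varepsilon 0)^+=0$, using $\Phi_\varepsilon 0\ge 0$. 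The $c_{E_0}o_F$-continuity of $T$ then yields $TW_\varepsilon^\alpha\OF 0$. Positivity of $T$ applied to the defining inequality above gives
$$
  0\le h_\alpha:=T\bigl(|J(f+g_\alpha)-Jf-Jg_\alpha|\bigr)\le TW_\varepsilon^\alpha+\varepsilon T\Phi_\varepsilon g_\alpha\le TW_\varepsilon^\alpha+\varepsilon u.
$$

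The final step, which I expect to be the delicate one, is to pass from this family of $\varepsilon$-dependent bounds to $h_\alpha\OF 0$. Since $F$ is Dedekind complete and $h_\alpha$ is bounded above in $F$ (take $\varepsilon=1$ in the inequality), the order $\limsup_\alpha h_\alpha=\inf_\alpha\sup_{\beta\ge\alpha}h_\beta$ exists, and the inequality displayed above gives $\limsup_\alpha h_\alpha\le\limsup_\alpha TW_\varepsilon^\alpha+\varepsilon u=\varepsilon u$ for every $\varepsilon>0$, since $TW_\varepsilon^\alpha\OF 0$. Taking the infimum over $\varepsilon>0$ in $F$ yields $\limsup_\alpha h_\alpha=0$, and combined with $h_\alpha\ge 0$ this forces $h_\alpha\OF 0$, as required. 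The main obstacle is organizing this last double-indexed limsup cleanly; everything hinges on Dedekind completeness of $F$ and on the fact that the uniform bound $TW_\varepsilon^\alpha\le T(\Psi_\varepsilon f+|Jf|)\in F$ makes the relevant suprema exist without extra hypotheses.
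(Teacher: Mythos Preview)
Your argument is correct and is essentially the paper's own proof: you introduce the same truncated excess $W_\varepsilon^\alpha=(|J(f+g_\alpha)-Jf-Jg_\alpha|-\varepsilon\Phi_\varepsilon g_\alpha)^+$, bound it by $\Psi_\varepsilon f+|Jf|\in E_0$, use $c_Xc_E$-continuity to get $W_\varepsilon^\alpha\CE 0$ and hence $TW_\varepsilon^\alpha\OF 0$, and then conclude via $0\le h_\alpha\le TW_\varepsilon^\alpha+\varepsilon u$ and an order-$\limsup$ argument in the Dedekind complete $F$. Your treatment is in fact a bit more explicit than the paper's in two places (noting $(-\varepsilon\Phi_\varepsilon 0)^+=0$ because $\Phi_\varepsilon 0\ge 0$, and checking that $h_\alpha$ is order bounded so that the $\limsup$ exists), but there is no substantive difference in approach.
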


\begin{proof}
It follows from (\ref{3.1}) that
$$
  |J(f+g_\alpha)-(Jf+Jg_\alpha)|\le|J(f+g_\alpha)-Jg_\alpha|+|Jf|\le
  \varepsilon\Phi_{\varepsilon}g_\alpha+\Psi_{\varepsilon}f+|Jf|,
$$
and hence
$$
  |J(f+g_\alpha)-(Jf+Jg_\alpha)|-\varepsilon\Phi_{\varepsilon}g_\alpha\le\Psi_{\varepsilon}f+|Jf|\ \ \ \ \ (\varepsilon>0, \alpha\in A).
$$
Thus 
\begin{equation}\label{3.2}
  0\le w_{\varepsilon,\alpha}:=\bigg(|J(f+g_\alpha)-(Jf+Jg_\alpha)|-\varepsilon\Phi_{\varepsilon}g_\alpha\bigg)_+\le
	\Psi_{\varepsilon}f+|Jf|
\end{equation}
for all $\varepsilon>0$ and $\alpha\in A$.
It follows from (\ref{3.2}) and from $c_Xc_E-$continuity of $J$ and $\Phi_{\varepsilon}$, that $E_0\ni w_{\varepsilon,\alpha}\CE 0$ as $\alpha\to\infty$.
Furthermore, (\ref{3.2}) implies
\begin{equation}\label{3.3}
  |J(f+g_\alpha)-(Jf+Jg_\alpha)|\le w_{\varepsilon,\alpha}+\varepsilon\Phi_{\varepsilon}g_\alpha\ \ \ \ \ (\varepsilon>0, \alpha\in A).
\end{equation}
Since $T\ge 0$ and $T\Phi_{\varepsilon}g_\alpha\le u$, we get from (\ref{3.3})
\begin{equation}\label{3.4}
  0\le T\bigg(|J(f+g_\alpha)-(Jf+Jg_\alpha)|\bigg)\le Tw_{\varepsilon,\alpha}+{\varepsilon}T\Phi_{\varepsilon}g_\alpha\le
	Tw_{\varepsilon,\alpha}+{\varepsilon}u
\end{equation}
for all $\varepsilon>0$ and $\alpha\in A$.
Since $F$ is Dedekind complete and $T$ is $c_{E_0}o_F-$conti\-nuous, $Tw_{\varepsilon,\alpha}\OF 0$, and in view of (\ref{3.4})
$$
  0\le (o_F)-\limsup\limits_{\alpha\to\infty}\ T\bigg(|J(f+g_\alpha)-(Jf+Jg_\alpha)|\bigg)\le{\varepsilon}u\ \ \ \ \ (\forall \varepsilon>0).
$$
Then $T\bigg(|J(f+g_\alpha)-(Jf+Jg_\alpha)|\bigg)\OF 0$.
\end{proof}

\begin{enumerate}
  \item[$(1)$] Replacing nets by sequences one can obtain a sequential version of Theorem \ref{OVBL-NV}, 
	whose details are left to the reader. 
  \item[$(2)$] In the case of $F=\mathbb{R}$ and $X=E=L^0(\mu)$ with the almost everywhere convergence, $E_0=L^1(\mu)$,
  $Tf=\int f d\mu$, and $J:X\to E$ given by $Jf=j\circ f$, where $j:\mathbb{C}\to\mathbb{C}$ is continuous with $j(0)=0$ 
  such that for every $\varepsilon>0$ there exist two continuous functions 
  $\phi_{\varepsilon},\psi_{\varepsilon}:\mathbb{C}\to\mathbb{R}_+$ satisfying
$$
  |j(x+y)-j(x)|\le\varepsilon\phi_{\varepsilon}(x)+\psi_{\varepsilon}(y) \ \ \ \ \ (\forall x,y\in \mathbb{C}),
$$
  we obtain Theorem \ref{Thm1}, which is the classical Brezis - Lieb's lemma \cite[Thm.2]{BL1},
	from Theorem \ref{OVBL-NV}, by letting $\Phi_{\varepsilon}(f):=\phi_{\varepsilon}\circ f$ 
	and $\Psi_{\varepsilon}(f):=\psi_{\varepsilon}\circ f$.
\end{enumerate}


\begin{thebibliography}{99}
\normalsize		
\bibitem{AB1}
C. D. Aliprantis and O. Burkinshaw,
{\em Locally solid Riesz spaces with applications to economics},
second ed., Mathematical Surveys and Monographs, Vol. 105,
American Mathematical
Society, Providence, RI, 2003.

\bibitem{AB2}
C. D. Aliprantis and O. Burkinshaw,
\emph{ Positive operators},
Springer, Dordrecht, 2006, Reprint
of the 1985 original.

\bibitem{BL1}
H. Brezis and E. Lieb, 
\emph{A relation between pointwise convergence of functions and convergence of functionals},
Proc. Amer. Math. Soc. 88(3), 486-490 (1983)

\bibitem{DE1}
Y. A. M. Dabboorasad and E. Y. Emelyanov,
\emph{Survey on unbounded convergence in the convergence vector lattices},
Vladikavkaz Math. J. 20(2), (2018)

\bibitem{DEM1}
Y. A. M. Dabboorasad,  E. Y. Emelyanov and  M. A. A. Marabeh,
\emph{um-Topology in multi-normed vector lattices},
Positivity 22(2), 653-667 (2018)

\bibitem{DEM2}
Y. A. M. Dabboorasad,  E. Y. Emelyanov and  M. A. A. Marabeh,
\emph{$u\tau$-Convergence in locally solid vector lattices},
Positivity https://doi.org/10.1007/s11117-018-0559-4 (2018)

\bibitem{EM1}
E. Y. Emelyanov and M. A. A. Marabeh, 
\emph{Two measure-free versions of the Brezis-Lieb lemma}, 
Vladikavkaz Math. J. 18(1), 21-25 (2016)

\bibitem{GLX1}
N. Gao, D. H. Leung and F. Xanthos, 
\emph{Duality for unbounded order convergence and applications}, 
Positivity https://doi.org/10.1007/s11117-017-0539-0 (2017)

\bibitem{GTX1}
N. Gao, V. G. Troitsky and F. Xanthos, 
\emph{Uo-convergence and its applications to Ces\'aro means in Banach lattices}, 
Isr. J. Math. 220, 649-689 (2017)

\bibitem{GX1}
N. Gao and F. Xanthos, 
\emph{Unbounded order convergence and application to martingales without probability}, 
J. Math. Anal. Appl. 415, 931-947 (2014)

\bibitem{M1}
M. Marabeh,
\emph{Brezis-Lieb lemma in convergence vector lattices}, 
Turkish J. of Math. 42, 1436-1442 (2018)

\bibitem{N1}
C. P. Niculescu, 
\emph{An overview of absolute continuity and its applications},
Internat. Ser. Numer. Math. 157, Birkhauser, Basel, 201-214 (2009)

\end{thebibliography}
\end{document}